\newtheorem{theorem}{Theorem}
\newtheorem{proposition}[theorem]{Proposition}
\newtheorem{lemma}[theorem]{Lemma}
\newtheorem{corollary}[theorem]{Corollary}
\theoremstyle{definition}
\title{On the first eigenvalue of the laplacian on compact surfaces of genus three}
\author{Antonio Ros}     
\thanks{Partially supported by MINECO/FEDER grants no. MTM2017-89677-P and Regional J. Andaluc\'ia grants 
no. P06-FQM-01642 and P18-FR-4049.}
\date{}
\begin{document}

\begin{abstract}
{\small 
For any compact Riemannian surface of genus three $(\Sigma,ds^2)$ Yang and Yau proved that the product of the first eigenvalue of the Laplacian $\lambda_1(ds^2)$ and the area $Area(ds^2)$ is bounded above by $24\pi$. In this paper we improve the result and we show that 
$\lambda_1(ds^2)Area(ds^2)\leq16(4-\sqrt{7})\pi \approx 21.668\,\pi$. 
About the sharpness of the bound, for the hyperbolic Klein quartic surface numerical computations give
the value  $\approx 21.414\,\pi$. }

\vspace{.1cm}

\noindent
{\it Mathematics Subject Classification:} 53A10, 58C40, 35P15.
\end{abstract}

\maketitle

\section{Introduction}

Let $\Sigma$ be a compact orientable surface of genus $g$. We define 
\begin{equation}
\label{lambda1}
\Lambda_1(g)=sup \, \left\{ \lambda_1(ds^2)Area(ds^2)\,\big{/} \, ds^2 \,\,{\rm is \, \, a \, \, Riemannian \, \, metric\, \, on} \, \, \Sigma\right\}
\end{equation}
where $\lambda_1(ds^2)$ and $Area(ds^2)$ are the first positive eigenvalue of the Laplacian and the area of $ds^2$, respectively. 
\vspace{.2cm}
Yang and Yau \cite{yy} gave the upper bound  

\[
\Lambda_1(g) \leq \left[\frac{g+3}{2}\right]8\pi,
\]
where $[x]$ denotes the integer part of $x$.
The argument uses as test functions branched conformal maps between $(\Sigma,ds^2)$ and the round sphere $S^2(1)\subset\mathbb{R}^3$. 

\vspace{.2cm}

Among the known results about this eigenvalue functional, we remark the following ones:

\vspace{.2cm}

$\cdot$ Hersch, \cite{hersch}, $\Lambda_1(0) = 8\pi$ and the equality holds only for constant curvature metrics. 
Hersch's theorem relates the first eigenvalue of the Laplacian and the conformal geometry of the sphere and this idea is at the origin of \cite{yy}, Li and Yau \cite{liyau} and the rest of the results that appear in this paper.

\vspace{.2cm}

$\cdot$ Nadirashvili, \cite{nadir},  $\Lambda_1(1) =  \frac{8}{\sqrt{3}}\pi^2 \approx  14.510\pi$. The equality holds for the flat equilateral torus.

\vspace{.2cm}

$\cdot$ Nayatani and Shoda, \cite{nayatani}, $\Lambda_1(2) =  16\pi$. The equality holds for the Bolza spherical surface and some other branched spherical metrics. 

\vspace{.4cm}

Concerning the metrics that attain the supremum $\Lambda_1(g)$, from Petrides \cite{Petri}
and  Matthiesen and Siffert \cite{matthie},  
for any 
genus $g$ we have 
an {\it extremal} metric $ds^2$ with conical singularities on $\Sigma$: 
$\lambda_1(ds^2)Area(ds^2)=\Lambda_1(g)$ and there is a branched minimal immersion 
$f:\Sigma\longrightarrow S^m(1)$, $m\geq 2$, given by first eigenfunctions of $ds^2$.

\vspace{.4cm}

In this paper we will consider surfaces of genus three. From \cite{yy} we have $\Lambda_1(3) \leq  24\pi$ and
 here we will prove the following result

\begin{theorem}
\label{main} For compact orientable 
Riemannian surfaces of genus $3$ one has the eigenvalue inequality
\[
\Lambda_1(3) \leq  16(4-\sqrt{7})\hspace{.02cm}\pi \approx 21.668\hspace{.02cm}\pi.
\] 
\end{theorem}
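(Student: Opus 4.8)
The cleanest route I see splits genus-$3$ conformal types into hyperelliptic ones and smooth plane quartics, disposes of the former by hand, and pushes the Yang--Yau argument only in the second, harder case. Recall that scheme: if $\phi\colon(\Sigma,ds^2)\to S^2(1)\subset\R^3$ is a branched conformal map of degree $d$, then by Hersch's topological trick one may post-compose with a M\"obius transformation of $S^2$ so that $\int_\Sigma\phi\,dA=0$; the three components of $\phi$ are then admissible for the Rayleigh quotient, and summing, using $|\phi|^2\equiv1$ and $\int_\Sigma|\nabla\phi|^2\,dA=2\,\mathrm{Area}(\phi)=8\pi d$, gives $\lambda_1(ds^2)\,\mathrm{Area}(ds^2)\le8\pi d$. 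Every genus-$3$ Riemann surface is either hyperelliptic --- where the degree-$2$ map yields $\lambda_1\,\mathrm{Area}\le16\pi<16(4-\sqrt7)\pi$ --- or has as canonical model a smooth plane quartic $C\subset\mathbb{CP}^2$ with $K_C=\mathcal O_C(1)$. So it remains to bound $\lambda_1\,\mathrm{Area}$ for an arbitrary metric in the conformal class of a plane quartic.

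The new ingredient is that $C$ carries a $\mathbb{P}^1$-family of degree-$3$ pencils: for each $p\in C$ the system $|K_C-p|$ is base-point free and defines $\phi_p\colon C\to\mathbb{CP}^1$, the composition of the canonical embedding $\iota$ with projection away from $\iota(p)$. Fix coordinates with $\iota(p)=[0:0:1]$, so $\iota=[f_0:f_1:f_2]$ with $f_0,f_1$ vanishing at $p$, and for a weight $t>0$ consider the normalized moment map of the reparametrized embedding $x\mapsto[\,tf_0(x):tf_1(x):f_2(x)\,]$, an $\R^{9}$-valued map whose squared norm is $\equiv1$. After the Hersch--Li--Yau balancing --- a Brouwer-degree argument inside the subgroup of projective transformations preserving the weighting --- one puts its barycenter in the balanced position and splits its components into block $A$ (the entries with indices in $\{0,1\}$, which reproduce $\phi_p$) and block $B$ (the entries involving the index $2$, which detect $p$ on a scale governed by $t$). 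The total Dirichlet energy of the moment map is controlled by the degree $4$ of the canonical model, and the $L^2$-mass redistributes between the two blocks as $t$ varies: as $t\to\infty$ all the mass sits in block $A$ and one recovers the bare bound $24\pi$, while decreasing $t$ feeds mass into block $B$.

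For the estimate, write $E_A,E_B$ for the block energies and $m_A,m_B$ for the block masses; then
\[
\lambda_1\le\frac{E_A+E_B}{m_A+m_B}\le\max\!\left(\frac{E_A}{m_A},\,\frac{E_B}{m_B}\right),
\]
so it is enough to make this maximum small. As $t$ increases, the block-$A$ ratio (governed by the degree-$3$ data) grows like $8\pi d$ for an effective weight $d=d(p,t)$, while the block-$B$ ratio (governed by the residual degree-$1$ part inside the degree-$4$ model) decreases like $288\pi/(16-d)$; the two balance when $d(16-d)=36$, i.e.\ $d^2-16d+36=0$, whose relevant root is $d=8-2\sqrt7$. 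This yields $\lambda_1\,\mathrm{Area}\le8\pi(8-2\sqrt7)=16(4-\sqrt7)\pi$, once one checks that $d(p,t)$ really attains this value for some admissible pair $(p,t)\in C\times(0,\infty)$.

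I expect the main obstacle to be the balancing together with the degenerate configurations. One must show that the required balanced position is reachable by an admissible projective transformation --- a fixed-point/degree argument on a compact parameter space --- and control the cases where it can break down: $p$ a hyperflex of $C$ (where $f_0,f_1$ vanish to higher order), the extremes $t\to0$ and $t\to\infty$, and mass escaping to the point $\iota(p)$ under the balancing flow. The second delicate point is the quantitative bookkeeping showing that block $B$ genuinely enlarges the usable mass, so that its energy-to-mass ratio drops below $24\pi$ well before $d$ reaches $3$; everything after that is the standard Rayleigh-quotient manipulation.
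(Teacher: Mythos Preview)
Your hyperelliptic/quartic dichotomy and the Hersch--Yang--Yau framework are exactly right, but the core mechanism you propose cannot give anything below $24\pi$.  The weighted embedding $x\mapsto[tf_0(x):tf_1(x):f_2(x)]$ is itself a projective image of the canonical embedding, namely $f_P\circ\iota$ with $P=\mathrm{diag}(t,t,1)$.  Its moment map therefore lies in the \emph{same} projective orbit of the moment map of $\iota$, and the Bourguignon--Li--Yau balancing (which ranges over the full positive cone in $HM(3)$) simply absorbs $P$.  After balancing you are back to the standard degree--$4$ picture: the image sits on the sphere $|A-\tfrac13 I|^2=\tfrac43$, the Dirichlet energy is the projective invariant $32\pi$, and the ratio is $24\pi$, independently of $t$ and of the choice of $p\in C$.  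If instead you balance only inside the subgroup preserving the weighting, you do not in general achieve $\int(\Phi-c)\,d\mu=0$ for all components, so the Rayleigh step fails for the unbalanced ones.  Either way the block inequality
\[
\lambda_1\le\frac{E_A+E_B}{m_A+m_B}\le\max\Bigl(\frac{E_A}{m_A},\frac{E_B}{m_B}\Bigr)
\]
is of no help: the middle term is exactly $24\pi$ once balanced, hence the maximum is always $\ge 24\pi$.  The formula $E_B/m_B=288\pi/(16-d)$ is not derived from anything; it is reverse--engineered so that $8\pi d=288\pi/(16-d)$ yields $d=8-2\sqrt7$.

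What the paper does instead is produce a test map that is \emph{not} in the projective orbit of the inclusion.  Writing $A\colon\Sigma\hookrightarrow\mathbb{C}\mathrm{P}^2\subset HM_1(3)$ for the canonical embedding and $B=\Delta A$ for its Gauss map (the vector from $A$ to the antipode on the tangent $\mathbb{C}\mathrm{P}^1$), one sets $\phi_a=A+aB$.  This is still spherical and conformal, but now both the radius $|\phi_a-\tfrac13 I|^2=\tfrac43(3a^2-3a+1)$ and the energy $\int|\nabla\phi_a|^2=32\pi(7a^2-4a+1)$ depend on $a$, so the ratio $F(a)=24\pi\,\dfrac{7a^2-4a+1}{3a^2-3a+1}$ genuinely varies and attains its minimum $16(4-\sqrt7)\pi$ at $a_1=(4-\sqrt7)/9$.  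The balancing lemma (an honest perturbation of Bourguignon--Li--Yau, using $|B|=2$ and that the distance from $\tfrac13 I$ to $\partial\mathcal H$ is $1/\sqrt3$) goes through precisely because $a_1<\sqrt3/6$.  The point is that the extra input is second--order (the Gauss map of the curve inside $\mathbb{C}\mathrm{P}^2$), not a first--order reparametrisation inside the projective group.
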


\vspace{.2cm}

We outline the idea of the proof. Bourguignon, Li and Yau, \cite{bourgui}, 
give an eigenvalue bound for algebraic Kaehler manifolds which extends \cite{yy}
 (we will consider just the case of holomorphic curves in the complex projective plane).
There is a natural isometric embedding of the complex projective plane 
$\mathbb{C}{\mathrm P}^2$ with the Fubini-Study metric
 in the Euclidean space of $3\times 3$ Hermitian matrices ${H\hspace{-.03cm}M}(3)$.
Given a compact Riemannian surface $(\Sigma,ds^2)$ we consider a holomorphic embedding of the induced 
Riemann surface $\Sigma$ in the complex projective plane (i.e. we see $\Sigma$ as a complex curve). 
Then they showed that there is a projective transformation $P$ 
on $\mathbb{C}{\mathrm P}^2$  so that the mean value of the map
$h:(\Sigma,ds^2)\longrightarrow \mathbb{C}{\mathrm P}^2\overset{P}{\longrightarrow}\mathbb{C}{\mathrm P}^2
 \longrightarrow {H\hspace{-.03cm}M}\hspace{-.03cm}(3)$
is  proportional to the identity matrix. In this way they prove that the first eigenvalue of the Laplacian 
of $(\Sigma,ds^2)$ is bounded in terms of the energy of $h$ and their result follows because 
this energy depends on the algebraic geometry of the curve $\Sigma$. 

\vspace{.2cm}

 However, when applied to a general compact surface of genus $3$, as $\Sigma$ corresponds to a quartic curve 
in $\mathbb{C}{\mathrm P}^2$, we obtain $\Lambda_1(3)\leq 24\pi$, the same as in \cite{yy}. 
In our argument, to get the eigenvalue estimate we use as a test function not only the inclusion map of the embedding of $\Sigma
\subset\mathbb{C}{\mathrm P}^2$ 
in ${H\hspace{-.03cm}M}(3)$ but we combine it with its Gauss 
map, \cite{ros1}. In this way we get a conformal spherical map from $(\Sigma,ds^2)$ to 
$H\hspace{-.03cm}M(3)$ that, after suitable projective deformation, has controlled mean value. 
From the energy of this new test map, we obtain a tighter upper bound of the functional 
$\lambda_1(ds^2)Area(ds^2)$.

\vspace{.2cm}

The author would like to thank F. Urbano for his valuable comments about the paper.

\vspace{.2cm}

\section{Preliminaries} 

\subsection{The complex projective plane.} 

\hspace{.2cm}

\vspace{.2cm}

Let $H\hspace{-.03cm}M\hspace{-.03cm}(3)=\{A\in gl(3,\mathbb{C}) \, / \, \bar{A}=A^t \}$ the space of 
$3\hspace{-.08cm}\times \hspace{-.08cm}3$-Hermitian matrices with the Euclidean metric 
\[
\langle A,B\rangle = 2\hspace{.05cm} tr\hspace{.01cm} AB \hspace{.5cm} \forall A,B\in H\hspace{-.03cm}M\hspace{-.03cm}(3),
\]
$I\in H\hspace{-.03cm}M\hspace{-.03cm}(3)$ the identity matrix and ${H\hspace{-.03cm}M}_1\hspace{-.03cm}(3)=\{A\in H\hspace{-.03cm}M\hspace{-.03cm}(3)\, /\,  tr\hspace{.01cm} A=1\}\simeq\mathbb{R}^8$ be the  hyperplane given by the trace $1$ restriction.

\vspace{.2cm}

The submanifold $\mathbb{C}{\mathrm P}^2=\{A\in {H\hspace{-.03cm}M}_1\hspace{-.03cm}(3)\, / \, AA=A \}$ with the induced metric is isometric to the complex projective plane with the Fubini-Study metric of constant holomorphic sectional curvature $1$, see Ros \cite{ros1}. This embedding appears naturally from several points of view: Immersions by the first eigenfunctions, equivariant embeddings of homogeneous spaces, immersions with planar geodesics, etc. 
The approach used here follows Tai \cite{tai}.  

\vspace{.2cm}

The action of the unitary group $U(3)$ on $\mathbb{C}{\mathrm P}^2$ is given by $(P,A)\mapsto \bar{P}^{t}\hspace{-.1cm}AP$, 
where $P\in U(3)$ and $A\in \mathbb{C}{\mathrm P}^2$. Hence the embedding of $\mathbb{C}{\mathrm P}^2$ in 
${H\hspace{-.03cm}M}_1\hspace{-.03cm}(3)$ is $U(3)$-equivariant. 

\vspace{.2cm}

We denote by $\tilde{\sigma}$ the second fundamental form of $\mathbb{C}{\mathrm P}^2 \subset 
{H\hspace{-.03cm}M}_1\hspace{-.03cm}(3)$. It is a symmetric tensor such that, for every point 
$A\in \mathbb{C}{\mathrm P}^2$, maps the tangent vectors $V,W\in T_A \mathbb{C}{\mathrm P}^2$ into the normal component of the immersion
\[
\tilde{\sigma}(V,W) \in  T_A^\perp \mathbb{C}{\mathrm P}^2.
\]

The mean curvature vector of $\mathbb{C}{\mathrm P}^2$ in ${H\hspace{-.03cm}M}_1\hspace{-.03cm}(3)$ is given by  
\[
\widetilde{H}=\frac{1}{4} tr\, \tilde{\sigma}=-\frac{3}{4}[A-\frac{1}{3}I]
\]
and $\mathbb{C}{\mathrm P}^2$ is a minimal submanifold in the sphere $S^7$ of 
${H\hspace{-.03cm}M}_1\hspace{-.03cm}(3)$ of center $\frac{1}{3}I$ and radius $2/\sqrt{3}$. 

\vspace{.1cm}

Among other properties of the embedding we remark the following ones, \cite{ros1}:

\begin{itemize} 

  \item[$\cdot$] Complex projective lines $\mathbb{C}{\mathrm P}^1\subset \mathbb{C}{\mathrm P}^2$ are totally geodesic and,  when viewed in ${H\hspace{-.03cm}M}_1\hspace{-.03cm}(3)$, are given by round 2-spheres of radius one. 
\vspace{.1cm}
  \item[$\cdot$] In the same way, geodesics of $\mathbb{C}{\mathrm P}^2$ are unit circles in 
  ${H\hspace{-.03cm}M}_1\hspace{-.03cm}(3)$. The planar geodesics property characterizes the nicest 
embedding of compact symmetric spaces of rank $1$ in the euclidean space, 
see Little \cite{little} and Sakamoto \cite{sakamoto}.
  \vspace{.1cm} 
 \item[$\cdot$] If $J$ is the complex structure in $\mathbb{C}{\mathrm P}^2$, then 
  $\tilde{\sigma}(JV,JW)=\tilde{\sigma}(V,W)$,  $\forall \,V,W\in T_A \mathbb{C}{\mathrm P}^2$.
 \vspace{.1cm} 
 \item[$\cdot$]  $\widetilde{\nabla} \tilde{\sigma}=0$, i. e. the second fundamental form is parallel.
\end{itemize}
  
\vspace{.2cm}

Let $A\in HM(3)$. We will use the notation
$A > 0$, resp. $A\geq 0$, when the Hermitian matrix is positive definite, resp. positive semidefinite. 

\vspace{.2cm}

Let ${\mathcal H}$ be the convex hull of $\mathbb{C}{\mathrm P}^2$. Then ${\mathcal H}$ verifies the 
following properties, see \cite{bourgui}:

\vspace{.2cm}
\begin{itemize}

\item[$\cdot$] ${\mathcal H} = \{A\in {H\hspace{-.03cm}M}_1\hspace{-.03cm}(3)\, / \, A\geq 0 \}$, 

\vspace{.2cm}

\item[$\cdot$]   
$int\,{\mathcal H}= \{ A\in {\mathcal H} \, / \, rank\hspace{.02cm} A =3 \} \, = \, 
\{ A\in {H\hspace{-.03cm}M}_1\hspace{-.03cm}(3) \, / \, A> 0\}$, where by $int\,{\mathcal H}$ 
we denote the topological interior of ${\mathcal H}$ in ${H\hspace{-.03cm}M}_1\hspace{-.03cm}(3)$, 

\vspace{.2cm}

\item[$\cdot$] $\partial{\mathcal H} = \{A\in {\mathcal H} \, / \, rank \hspace{.02cm} A \leq 2 \}$,

\vspace{.2cm}

\item[$\cdot$] $\mathbb{C}{\mathrm P}^2\subset \partial{\mathcal H}$ is the set of extremal points of 
$\mathcal H$,

\vspace{.2cm}

\item[$\cdot$] $\partial{\mathcal H}$ is the union of  all the unit $3$-balls in ${H\hspace{-.03cm}M}_1\hspace{-.03cm}(3)$ enclosed by complex projective lines $\mathbb{C}{\mathrm P}^1\subset \mathbb{C}{\mathrm P}^2$.
\end{itemize}

\vspace{.2cm}

The projection of $\mathbb{C}^3-\{0\}$ over $\mathbb{C}{\mathrm P}^2$ 
\[
z\longmapsto \frac{1}{z\bar{z}^t}\bar{z}^t z,
\]
with $z=(z_0,z_1,z_2)$, defines the identification between $\mathbb{C}^3-\{0\}/\sim$, the usual projective 
plane with homogeneous coordinates $[z]=[z_0,z_1,z_2]$, and the submanifold of 
${H\hspace{-.03cm}M}_1\hspace{-.03cm}(3)$.

\vspace{.2cm}

Given a regular matrix $P\in GL(3,\mathbb{C})$, the projective transformation 
$[z]\longmapsto [zP]$, when described in terms of elements of $\mathbb{C}{\mathrm P}^2$, is the map
\begin{equation}
\label{projectivity}
f_P:\frac{1}{z\bar{z}^t}\bar{z}^t z \longmapsto \frac{1}{zP  \bar{P}^t\bar{z}^t }
\bar{P}^t\bar{z}^t zP.
\end{equation}

Following Bourguignon, Li and Yau \cite{bourgui}, we consider projective transformations $f_P$, where 
$P$ is {a} positive definite { matrix} in $HM(3)$: Any other projectivity can be decomposed as  $f_P\circ f_Q$, 
where $P>0$ and $Q$ is a unitary matrix.
Moreover, after multiplying by a positive scalar factor,
we assume that $tr\hspace{.03cm} P=1$. Therefore, up to unitary motions, the space of projective transformations is parametrized by the interior of the convex hull of $\mathbb{C}{\mathrm P}^2$,
\[
f_P:\mathbb{C}{\mathrm P}^2\longrightarrow \mathbb{C}{\mathrm P}^2, \hspace{1cm} P\in int\,{\mathcal H}.
\]  

\vspace{.2cm}

Note that this generalizes the standard fact that conformal transformations of the sphere 
$S^2=\mathbb{C}{\mathrm P}^1$ are parametrized, modulo orthogonal motions, by the open unit 
ball in $\mathbb{R}^3$.

\vspace{.2cm}

\subsection{Geometry of complex curves in $\mathbb{C}{\mathrm P}^2$}  

\hspace{.2cm}

\vspace{.2cm}

Let  $A:\Sigma\longrightarrow \mathbb{C}{\mathrm P}^2\subset H\hspace{-.03cm}M_1\hspace{-.03cm}(3)$ be 
a compact complex curve immersed in the complex projective plane (we will assume that immersion is unbranched). 
For every  point $A\in \Sigma$, the (affine) tangent plane is identified with the corresponding complex 
projective line $\mathbb{C}{\mathrm P}_{\hspace{-.15cm}A}^1$ which is a unit 2-sphere in the Euclidean space. 
We define the {\it Gauss map} $B:\Sigma\longrightarrow HM(3)$ of the curve 
as the vector joining $A$ with its antipodal point $A^-$ in this $2$-sphere,  $B=A^--A$, 
see Figure \ref{tangente}.  
Note that, as a complex line in $\mathbb{C}{\mathrm P}^2$ is determined by two of its points, 
it follows that the vector $B$ determines the tangent $2$-sphere at the point $A\in\Sigma$.
\begin{figure}[h]
\begin{center}
\includegraphics[width=7cm]{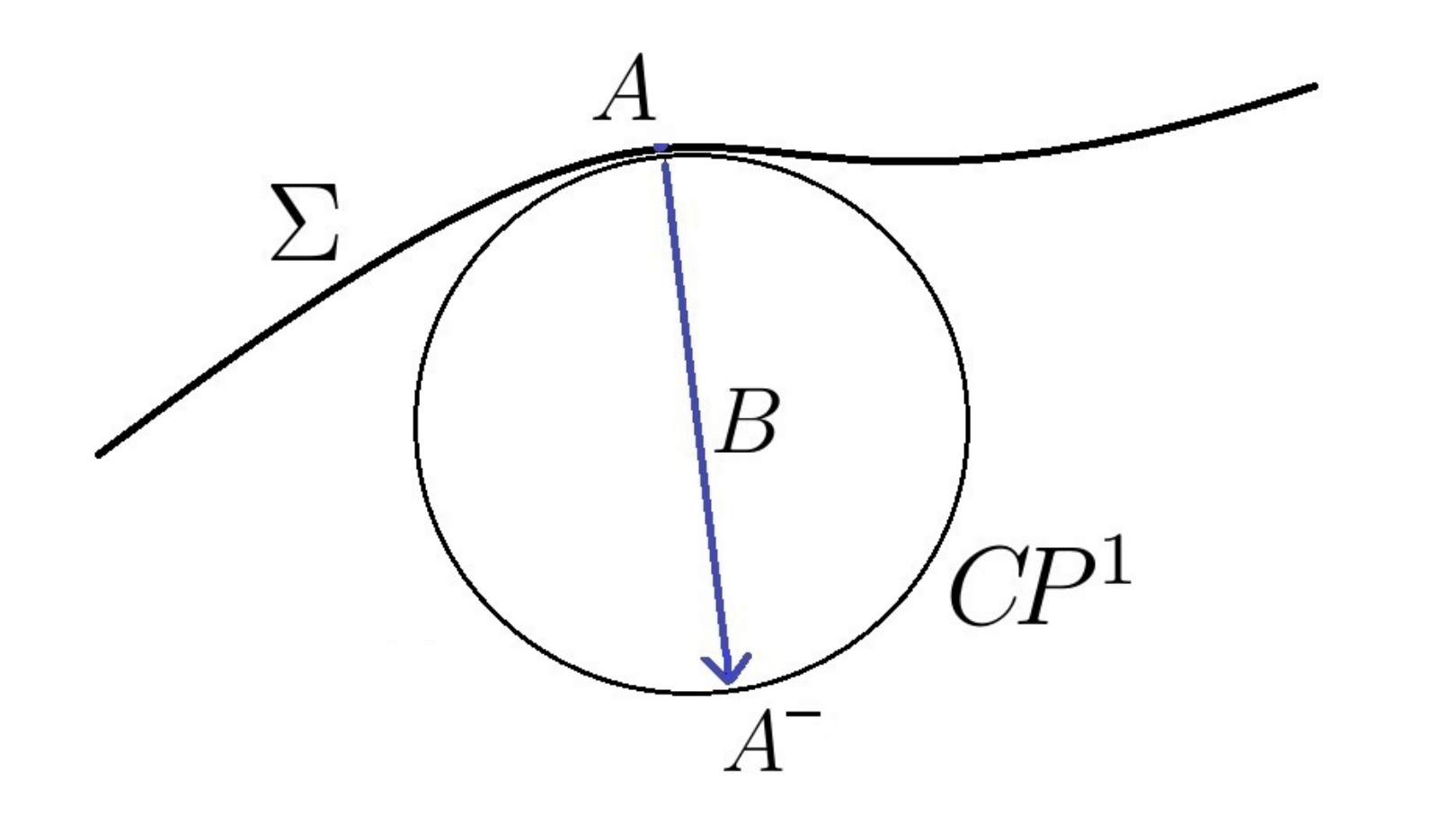}
\caption{}
\label{tangente}
\end{center}
\end{figure}
 On the surface $\Sigma$ we consider, unless otherwise stated, the metric 
$\langle,\rangle$ induced by the immersion and we denote by $K$, $d\Sigma$ and 
$\Delta$ the Gauss curvature, the Riemannian measure and Laplacian of the surface.

\vspace{.2cm}

The Hessian operator of the vector valued map $A:\Sigma\longrightarrow HM(3)$ will be denoted 
by $\nabla^2\hspace{-.1cm}A$. It coincides with the second fundamental form of $\Sigma$ in $HM(3)$ 
and it can decomposed as
\[
\nabla^2 \hspace{-.1cm}A(X,Y)=\sigma(X,Y)+\tilde{\sigma}(X,Y) \hspace{.8cm} \forall\,  X, Y\in T_A\Sigma,
\]
where $\sigma$ is the second fundamental form  of $\Sigma$ in $\mathbb{C}{\mathrm P}^2$, it is normal 
to $T_A\Sigma$ but tangent to $T_A\mathbb{C}{\mathrm P}^2$, and $\tilde{\sigma}$ is normal to 
$\mathbb{C}{\mathrm P}^2$.

 \vspace{.2cm}

The compatibility between the complex structure $J$ and the second fundamental form $\sigma$ 
is given by
\begin{equation}
\label{J}
\sigma(JX,Y)=\sigma(X,JY)=J\sigma(X,Y)
\end{equation}
and, as a consequence, for every orthonormal basis 
$E_1,E_2$ of $T_A\Sigma$,  with $E_2=JE_1$, there is an orthonormal basis  $E_3,E_4$ of $T_A^\perp\Sigma$,
$E_4=JE_3$, and a real parameter $\lambda$ such that  $\sigma(E_1,E_1)=\lambda E_3$. Moreover, 
 $\sigma$ is determined by these data.
In particular, we have
\begin{equation}
\label{cuenta1}
\sum_{i=1,2}\big\langle \sigma(E_i,X),\sigma(E_i,Y) \big\rangle  
= \frac{1}{2}|\sigma|^2\big\langle X,Y\big\rangle
\end{equation}
and the Gauss equation 
\begin{equation}
\label{gauss}
K=1-\frac{1}{2}|\sigma|^2.
\end{equation}

As complex curves are minimal surfaces in the complex projective plane, it follows that the 
Laplacian of the immersion map $A:\Sigma\longrightarrow HM(3)$ is equal to
\begin{equation}
\Delta A =\sum_i \nabla^2\hspace{-.1cm}A(E_i,E_i) 
= \tilde{\sigma}(E_1,E_1)+\tilde{\sigma}(E_2,E_2) = 2\vec{H} = B,
\label{B}
\end{equation}
$\vec{H}$ being the mean curvature vector of the immersion. With respect to the equality $\Delta A=B$, 
as both functions depend only on the tangent plane $T_A\Sigma$, it is enough to check it for the sphere 
$\mathbb{C}{\mathrm P}^1$. 

\vspace{.2cm} 

From Lemma 3.2 in  \cite{ros1} (note that the Laplacian $\Delta$ considered there is the opposite of 
the one used here), we have 
\begin{equation}
\left\{
\begin{array}{ccc}
|I|^2 = 6\hspace{1cm} & \langle A,I\hspace{.02cm}\rangle = 2 \hspace{1cm} 
& |A|^2=2 \vspace{.1cm}
\\
\langle B,I \hspace{.02cm}\rangle =0 \hspace{1cm}& \langle B,A\rangle=-2  \hspace{1cm} &|B|^2= 4 
\vspace{.1cm}
\\
\langle \Delta B,I\hspace{.03cm}\rangle=0 \hspace{1cm}& \langle \Delta B,A\rangle= 4  
\hspace{1cm}& \langle \Delta B,B\rangle= - 8 - 2|\sigma|^2
\end{array}
\right.
\label{calculos}
\end{equation}

\vspace{.2cm}

The $degree$ of the complex curve $A:\Sigma\longrightarrow \mathbb{C}{\mathrm P}^2$ is an 
{\it algebro-geometric/topological} invariant given by a positive integer $d$ satisfying the 
following properties:

\vspace{.2cm}

a)\hspace{.01cm}  If $\omega_\Sigma$ and $\omega_{\mathbb{C}{\mathrm P}^2}$ are the Kaehler 2-forms of 
$\Sigma$ and $\mathbb{C}{\mathrm P}^2$, respectively, then the pullback image 
between the cohomology spaces
$A^*:H^2(\mathbb{C}{\mathrm P}^2, \mathbb{C})\longrightarrow H^2(\Sigma, \mathbb{C})$ maps the class 
$[\omega_{\mathbb{C}{\mathrm P}^2}]$ into $ d\hspace{.04cm}[\omega_\Sigma]$. In particular 
$Area(\Sigma)=4\pi d$.

\vspace{.2cm}

b)\hspace{.01cm}  Any complex line intersects the curve $\Sigma$ at exactly $d$ points (counted 
with multiplicity). As a consequence, in the embedded case the curve is given by a homogeneous polynomial equation of degree $d$. 

\vspace{.2cm}

c) \hspace{.01cm} If  $g$ is the genus of $\Sigma$, the integral of (\ref{gauss}) combined with 
the Gauss-Bonnet theorem give
\begin{equation}
\int_\Sigma 1 \hspace{.05cm} d\Sigma= 4\pi d, \hspace{1cm}
\int_\Sigma |\sigma|^2 d\Sigma= 8\pi (g+d-1). 
\label{genus}
\end{equation}


\vspace{.2cm}

For $a \in\mathbb{R}$ we define the map
\begin{equation}
\label{phia}
\phi_a : \Sigma\longrightarrow {H\hspace{-.03cm}M}_1\hspace{-.03cm}(3), \hspace{1cm} \phi_a  = A + aB.
\end{equation}

Below we collect some of its properties.  

\vspace{.1cm}

\begin{lemma} 
\label{lema1}
(a)\,  {\it The map $\phi_a$ is spherical valued},
\[
|\phi_a - \frac{1}{3}I|^2= \frac{4}{3}( 3a^2-3a+1).
\]

\vspace{.1cm}

(b) \, {\it $\phi_a:\Sigma \longrightarrow {H\hspace{-.03cm}M}_1\hspace{-.03cm}(3)$ is  a conformal 
immersion $\forall a\neq \frac{1}{2}$. If $\Sigma\neq \mathbb{C}{\mathrm P}^1$, the map $\phi_{\frac{1}{2}}$ 
is a branched conformal immersion and the branch points lie at the inflection points of $\Sigma$.}

\vspace{.1cm}

(c) {\it The energy of $\phi_a$ is} 
\[
\int_\Sigma |\nabla\phi_a |^2 d\Sigma = 8\pi \big\{ 2(3d+g-1)a^2 - 4\hspace{.01cm}d \hspace{.01cm}a  + d    \big\}. 
\]
In particular, the energy remains invariant under projective transformations.
\end{lemma}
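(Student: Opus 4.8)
The plan is to prove the three parts of Lemma \ref{lema1} by direct computation from the identities \eqref{calculos}, which already encode all the pointwise inner products between $A$, $B$, $I$ and the relevant Laplacians. For part (a), I would simply expand $|\phi_a-\tfrac13 I|^2 = |A+aB-\tfrac13 I|^2$ using bilinearity: the cross terms $\langle A,B\rangle=-2$, $\langle A,I\rangle = 2$, $\langle B,I\rangle = 0$, together with $|A|^2 = 2$, $|B|^2=4$, $|I|^2 = 6$, collapse everything to a quadratic polynomial in $a$ with constant coefficients; checking that it equals $\tfrac43(3a^2-3a+1)$ is then routine arithmetic. The point worth stressing in the write-up is that the answer is constant on $\Sigma$, so $\phi_a$ genuinely lands in a sphere of $HM_1(3)$ (of radius $\tfrac{2}{\sqrt3}\sqrt{3a^2-3a+1}$), centered at $\tfrac13 I$.

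For part (b) I would compute $\nabla\phi_a = \nabla A + a\,\nabla B$ and examine the induced quadratic form on $T_A\Sigma$. Using the orthonormal frame $E_1,E_2=JE_1$ of $T_A\Sigma$ from the discussion around \eqref{J}: $\nabla A(E_i,\cdot)$ is the identity on the tangent space (since $A$ is an isometric immersion into $HM(3)$), while the tangential part of $\nabla B$ must be expressed via the shape operator of the Gauss map. Here I expect to invoke the structure of $\sigma$: since $\sigma$ is $J$-compatible and complex curves are minimal, the derivative of $B$ along $\Sigma$ should, up to the isometry, act as a conformal (indeed a scalar-times-conjugate-linear, hence still angle-preserving) endomorphism, with the scalar controlled by $|\sigma|$; combining the contribution of $\nabla A$ (which acts as $+1$) and that of $a\,\nabla B$ (which acts by something like $-2a$ times identity plus a trace-free $J$-anti-invariant piece) yields a metric proportional to $\langle,\rangle$ precisely when the would-be non-conformal part cancels, which happens for all $a$ once the algebra is done, except that the \emph{conformal factor} itself vanishes exactly at $a=\tfrac12$ at inflection points (where $\sigma$ degenerates). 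So the dichotomy in (b) comes from when the conformal factor $|\nabla\phi_a|^2/2$ is strictly positive. This is the step I expect to be the main obstacle: getting the precise description of the differential of the Gauss map $B$ in terms of $\sigma$ and $J$, and seeing cleanly why conformality holds for every $a$ while branching occurs only at $a=\tfrac12$ over the inflection locus. The cleanest route is probably to note (as the paper does for $\Delta A = B$) that $\nabla\phi_a$ depends only on the $2$-jet of $\Sigma$ at $A$, reduce to an explicit local model, or alternatively use that $\phi_{1/2}$ is (up to scale and translation) the image of $\Sigma$ under the map $A\mapsto A^- $ composed with the standard embedding, i.e. essentially the "conjugate" complex curve, which is holomorphic away from inflection points.

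For part (c), I would integrate the conformal factor found in (b). Since $|\nabla\phi_a|^2 = 2\cdot(\text{conformal factor})$ and the conformal factor is an affine-in-$|\sigma|^2$ expression in $a$, the integral $\int_\Sigma |\nabla\phi_a|^2\,d\Sigma$ becomes a combination of $\int_\Sigma 1\,d\Sigma = 4\pi d$ and $\int_\Sigma |\sigma|^2\,d\Sigma = 8\pi(g+d-1)$ from \eqref{genus}. Matching coefficients should give exactly $8\pi\{2(3d+g-1)a^2 - 4da + d\}$; as a sanity check, at $a=0$ this reads $8\pi d = 2\,Area(\Sigma)$, consistent with $A$ being a conformal (harmonic) isometric immersion, and one can also cross-check the $a=\tfrac12$ value against the energy of $\phi_{1/2}$ computed from the conjugate curve. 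The invariance of the energy under projective transformations then follows because $d$, $g$ are projective invariants and the formula for $\int|\nabla\phi_a|^2$ depends only on them; alternatively, one argues that $f_P$ acts on $\Sigma$ by a conformal change that fixes the Dirichlet energy of the composite spherical map — but since the statement only asserts invariance of this particular integral, reading it off from the closed-form expression is the shortest justification. I would close by remarking that this invariance is exactly what makes $\phi_a$ a viable family of test maps for the Bourguignon--Li--Yau argument after a projective recentering.
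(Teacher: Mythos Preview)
Your plan for parts (a) and (c) is correct and is exactly what the paper does: expand using \eqref{calculos} for (a), and for (c) integrate the pointwise identity $|\nabla\phi_a|^2 = 2(1-2a)^2 + 2a^2|\sigma|^2$ against \eqref{genus}, then read off projective invariance from the fact that the answer depends only on $d$ and $g$.

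The gap is in (b), and it is precisely the step you flag as the obstacle. Your working picture of $\nabla B$ as ``a scalar-times-conjugate-linear endomorphism of $T_A\Sigma$ plus a trace-free $J$-anti-invariant piece'' is off: $\nabla B(X)$ does not stay in $T_A\Sigma$. The formula the paper imports from \cite{ros1} is
\[
\nabla B(X)=2\sum_i \tilde{\sigma}(\sigma(E_i,X),E_i)-2X,
\]
so that $\nabla\phi_a(X)=(1-2a)X+2a\sum_i\tilde\sigma(\sigma(E_i,X),E_i)$ splits \emph{orthogonally} into a piece tangent to $\Sigma$ and a piece normal to $\mathbb{C}{\mathrm P}^2$ in $HM_1(3)$. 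Conformality of the first summand is automatic; what remains is to see that $X\mapsto \sum_i\tilde\sigma(\sigma(E_i,X),E_i)$ induces a conformal quadratic form. The paper does not attempt a direct tensor computation here: it observes that at each point $\sigma=\lambda\,\sigma_0$ for the second fundamental form $\sigma_0$ of a suitably rotated round conic $z_0^2+z_1^2+z_2^2=0$, so it suffices to check conformality for the conic, where it follows from the $1$-dimensional unitary stabilizer of $(A,{\mathcal C})$. Your fallback ``reduce to an explicit local model via the $2$-jet'' is exactly this move; make it the main argument rather than a parenthetical.

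Separately, the paper obtains the pointwise formula for $|\nabla\phi_a|^2$ without squaring the differential at all: since $|\phi_a-\tfrac13 I|^2$ is constant by (a), one has $|\nabla\phi_a|^2=-\langle\Delta\phi_a,\phi_a\rangle=-\langle B+a\,\Delta B,\,A+aB\rangle$, and then \eqref{calculos} gives $2(1-2a)^2+2a^2|\sigma|^2$ immediately. This both identifies the branch locus of $\phi_{1/2}$ with $\{\sigma=0\}$ and feeds directly into (c). I would replace your attempt to extract the conformal factor from the ``endomorphism'' picture with this one-line Laplacian trick.
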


\begin{proof}
(a) Using  (\ref{calculos}) we get 
\[
|\phi_a-\frac{1}{3}I|^2 = |a B +(A-\frac{1}{3}I)|^2 
= a^2|B|^2 + 2a \langle B, A\rangle + |A-\frac{1}{3}I|^2
= \frac{4}{3}(3a^2-3a+{1}).
\]

\vspace{.1cm}

Now we prove (b). The differential of the mean curvature 
vector $\vec{H}$, \cite{ros1} p. 438, is given by
\[
\nabla B(X)= 2\nabla \vec{H}(X)=2\sum_i \tilde{\sigma}(\sigma(E_i,X),E_i) - 2X
\]
and so
\[
\nabla\phi_a(X)=  (1-2a)X + 2a\sum_i \tilde{\sigma}(\sigma(E_i,X),E_i), \hspace{1cm}X\in T_A\Sigma. 
\]

If we denote by $\tau$ the symmetric tensor induced by $\phi_a$,
\[
\tau(X,Y)=
\big\langle \nabla\phi_a(X),  \nabla\phi_a(Y)\big\rangle,  \hspace{1cm} X,Y\in T_A\Sigma,
\]
we have 
\[ 
\tau(X,Y)=  (1-2a)^2\big\langle X,Y\big\rangle  + 
4a^2\sum_{i,j} \big\langle\tilde{\sigma}(\sigma(E_i,X),E_i),\tilde{\sigma}(\sigma(E_j,Y),E_j)\big\rangle.
\]

From (\ref{J}) we conclude that admissible second fundamental forms $\sigma$ depend, up to 
isometries in $U(3)$, on a real parameter. In particular, for any $A\in\Sigma$, there is  
$\lambda\in\mathbb{R}$ such that
\[
\sigma(X,Y)=\lambda \,\sigma_0(X,Y),
\]
where $\sigma_0$ is the second fundamental form of a suitable rotated image $\mathcal C$ of the round complex conic 
$z_0^2+z_1^2+z_2^2=0$: $A\in {\mathcal C}$,
$T_A \hspace{.02cm}{\mathcal C} = T_A\Sigma$ and $\mathcal C$ is isometric to the sphere 
$S^2\subset\mathbb{R}^3$ of radius $\sqrt{2}$.
\vspace{.2cm}

Therefore, in order to prove that $\phi_a:\Sigma\longrightarrow {H\hspace{-.03cm}M}_1\hspace{-.03cm}(3)$ 
is a conformal map, it is enough to show that the same is true for conic map
 $\phi_a{:\mathcal C}\longrightarrow {H\hspace{-.03cm}M}_1\hspace{-.03cm}(3)$:
As this follows directly from the fact that the group of unitary transformations of  
$\mathbb{C}{\mathrm P}^2$ that fix $A$ and $\mathcal C$ is $1$-dimensional (these transformations 
are the rotations of $S^2$ around one of its axis), we deduce that the metric $\tau$ is 
invariant under these unitary transformations and so it is proportional to euclidean one. 

\vspace{.2cm}

For the last part of the assertion in (b), note that, as $ |\phi_a-\frac{1}{3}I|^2$ is constant 
and $\Delta A=B$, we have
\[
|\nabla \phi_a|^2 = - \big\langle \Delta\phi_a, \phi_a\big\rangle =
- \big\langle B+ a\hspace{.03cm} \Delta B, A+ a\hspace{.03cm} B\big\rangle =
\]
\[
-a^2\big\langle \Delta B,B\big\rangle -2a\,|B|^2 -  \big\langle B,A\big\rangle
\]
and using (\ref{calculos}) we obtain
\begin{equation}
| \nabla\phi_a|^2 = 2(1-2a)^2 + 2a^2|\sigma|^2. 
\label{b}
\end{equation}

The inflection points of $\Sigma$ are the points where $\sigma=0$ and (b) is proved.

\vspace{.1cm}  

Finally, (c) follows from (\ref{genus}) and (\ref{b}). 
\end{proof}

\subsection{Quartic curves in the complex projective plane}
\label{quartic}

\hspace{.2cm}

\vspace{.2cm}

Let $\Sigma$ be a compact non hyperelliptic Riemann surface of genus $3$. It is known, 
see \cite{farkas} p. 136,  that the canonical mapping 
\[
[\omega_0,\omega_1,\omega_2]: \Sigma\longrightarrow \mathbb{C}{\mathrm P}^2, 
\]
$\omega_0,\omega_1,\omega_2$ being a basis of the space of holomorphic $1$-forms, 
is an embedding of degree $4$.

\vspace{.2cm}

So $\Sigma$ can be seen as a smooth algebraic quartic curve embedded in the complex projective plane. 
With the induced metric, we have 
\begin{equation}
Area(\Sigma) = 16\pi, \hspace{1cm}  \int_\Sigma|\sigma|^2 d\Sigma= 48\pi.
\label{genus3}
\end{equation}

An important example is the {\it Klein's quartic}, given by the equation
\begin{equation}
\label{klein1}
{\mathcal K} =\left\{[z_0,z_1,z_2]\in \mathbb{C}{\mathrm P}^2\, /\,  z_0^3z_1+z_1^3z_2+z_2^3z_0=0\right\}.
\end{equation}

The Hurwitz's theorem says that the order of the group of holomorphic transformations of a Riemann 
surface of genus $3$ is less than or equal to $168$, \cite{farkas} p. 258, and  the Klein's quartic 
is characterized as the unique case which attains the equality, \cite{karcher}. 

\vspace{.2cm}

\subsection{The first eigenvalue of compact surfaces}
\label{first}

\hspace{.2cm}

\vspace{.2cm}

A Riemannian metric $ds^2$ on a compact orientable surface $\Sigma$ induces a Riemann surface 
structure. The Riemannian measure and the Laplacian operator acting on smooth functions will be 
denoted by $d\mu$ and $\Delta^{ds^2}$. We will also consider the area 
\[
Area(ds^2)=\int_\Sigma 1\, d\mu
\]
and the eigenvalues of the Laplacian
\[
0=\lambda_0(ds^2) < \lambda_1(ds^2)\leq \lambda_2(ds^2) \leq \cdots.
\]

The first eigenvalue $\lambda_1(ds^2)$, written in terms of the Rayleigh quotient, is given by 

\[
\lambda_1(ds^2) = \inf \left\{ \frac{\int_\Sigma |\nabla u|^2 d\mu}{\int_\Sigma u^2 d\mu}\, \left/\right. \, 
u\in C^1(\Sigma)-\{0\}, 
\,  \int_\Sigma u \,d\mu=0 \right\}.
\]

\vspace{.2cm}

Yang and Yau, \cite{yy}, showed that if the Riemann surface induced by $(\Sigma,ds^2)$ admits 
a nonconstant meromorphic map 
$(\Sigma,ds^2)\longrightarrow \mathbb{C}\cup\{\infty\}$ of degree $d$, then 
$\lambda_1(ds^2)Area(ds^2)\leq 8\pi d$. In particular,

\vspace{.1cm}

\begin{itemize}
  
\item[$\cdot$]  (Hersch, \cite{hersch}) For any metric $ds^2$ on the sphere $S^2$, 
$\lambda_1(ds^2)Area(ds^2) \leq 8\pi$ and the equality holds just for metrics of constant curvature. 

\vspace{.2cm}

\item[$\cdot$]   If $(\Sigma,ds^2)$ is hyperelliptic, then $\lambda_1(ds^2)Area(ds^2)\le 16\pi$.
\vspace{.2cm}
\item[$\cdot$]   If $g$ is the genus of $\Sigma$, then for any metric on $\Sigma$, we have
\[
\lambda_1(ds^2)Area(ds^2)\leq  \left[\frac{g+3}{2}\right]8\pi.
\]
\end{itemize} 

\vspace{.1cm}

The invariant $\Lambda_1(g)$ is defined as the supremum of $\lambda_1(ds^2)Area(ds^2)$, 
for any metric $ds^2$ on a compact surface of genus $g$.
The sharp bound is known for $g=0$, $\Lambda_1(0)=8\pi$,  and  $g=1,2$:

\begin{itemize}
  \item[$\cdot$]  (Nadirashvili, \cite{nadir}) 
  $\Lambda_1(1)= 8\pi^2/\sqrt{3}$. The only extremal surface is the flat equilateral torus. 

\vspace{.2cm}

  \item[$\cdot$]  (Nayatani and Shoda, \cite{nayatani})   $\Lambda_1(2)=16\pi$. The equality holds for Bolza surface and some other hyperelliptic branched spherical metrics.  In this case any extremal metric is a spherical one with branched singularities, see Proposition \ref{nueva} below. 

\vspace{.2cm}

  \item[$\cdot$]  (Karpukhin, \cite{karpu2}) For any $g\geq 3$ one has  
  $\Lambda_1(g)< \left[ {(g+3)}/{2}\right]8\pi$.

  \end{itemize}

\vspace{.1cm}

 {\it Extremal metrics} on $\Sigma$ are those which maximize the first eigenvalue functional. 
 Petrides \cite{Petri} and Matthiesen and Siffert \cite{matthie} show that
for any genus there is a metric $ds^2$ with conical singularities which satisfies 
$\lambda_1(ds^2)Area(ds^2)=\Lambda_1(g)$. Moreover this metric admits an isometric branched minimal immersion in a sphere $f:(\Sigma,ds^2)\longrightarrow S^m(1)$ by the first eigenfunctions of $ds^2$.
For other related results see  Montiel and Ros \cite{montiel}, 
El Soufi and Ilias \cite{elsoufi}, Fraser and Schoen \cite{fraser}, 
Nadirashvili and Sire \cite{nadir1}, Kokarev \cite{kokarev},
Cianci, Karpukhin and Medvedev \cite{karpu1}
 and Matthiesen and Petrides \cite{matthiepetrides}.

\vspace{.2cm}

For genus $g=3$ Yang and Yau, \cite{yy}, gave the bound $\Lambda_1(3)\leq 24\pi$ and we will show in 
Theorem \ref{main} that it can be improved to $\Lambda_1(3) \leq 16(4-\sqrt{7})\pi\approx 21.668\pi$.
 Cook \cite{cook}, using a point of view similar to the one of \cite{nadi2}, give a number of results 
on the eigenvalues of the {\it hyperbolic Klein quartic} surface $({\mathcal K},ds^2_{-1})$ 
(the conformal metric of constant negative curvature). In particular, \cite{cook} p. 131, the numerical 
experimental value
\[
\lambda_1(ds^2_{-1}) Area (ds^2_{-1}) \approx 21.414\hspace{.02cm}\pi.
\]

\vspace{.2cm}

Every Riemann surface of genus $\geq 2$ admits a unique hyperbolic metric and, according Theorem \ref{main},  
for the Klein quartic this metric is close to reaching the supremum.
However, hyperbolic metrics are never extremal for any genus: Bryant \cite{bryant} shows that there 
are no local isometric minimal immersions from the hyperbolic plane into a sphere $S^m(R)$ of radius $R>0$. 
On the contrary, it is conjectured that for $g=3$ the Klein's quartic attains the maximum among hyperbolic
 metrics, \cite{cook} p. 99.


\vspace{.2cm}

If $ds^2$ is an {extremal metric} on a compact surface $\Sigma$, then 
the metric has finitely many conical singularities and it admits and isometric minimal immersion 
into a sphere, Petrides \cite{Petri}. Each conical singularity is a branch point of the immersion and the cone angle is a integer multiple of $2\pi$, Kokarev \cite{kokarev} p. 216. In particular, 
the metric induces a Riemann surface structure on the entire $\Sigma$ and $ds^2$ is a conformal metric with finitely many singularities. 

\vspace{.2cm}

In the case of genus two Nayatani and Shoda show that $\Lambda_1(2)=16\pi$ and give some extremal branched spherical metrics (including the Bolza surface), \cite{nayatani}.  
Now we note that this is the general situation.

\begin{proposition} 
\label{nueva}
Let $\Sigma$ be a compact Riemann surface of genus two and \mbox{$ds^2$} an extremal \mbox{metric} on $\Sigma$.  
Then $ds^2$ is the branched spherical metric induced by a degree two conformal map 
\mbox{$\phi:\Sigma\longrightarrow S^2(1)$}. In particular $ds^2$ has exactly six singular points.
\end{proposition}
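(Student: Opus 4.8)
\emph{Proof proposal.} The plan is to run Hersch's argument \cite{hersch} using the hyperelliptic double cover of $\Sigma$ as test map, and then to exploit the fact that for genus two the resulting constant $16\pi$ is exactly the supremum $\Lambda_1(2)$. First I would assemble the input. Since $ds^2$ is extremal, $\lambda_1(ds^2)Area(ds^2)=\Lambda_1(2)=16\pi$ by Nayatani and Shoda \cite{nayatani}; and by Petrides \cite{Petri} and Kokarev \cite{kokarev} the metric $ds^2$ has only finitely many conical singularities, all with cone angle in $2\pi\mathbb{Z}_{>0}$, and off the singular set it is a smooth conformal metric, so $ds^2$ induces a Riemann surface structure on $\Sigma$. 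Every genus two Riemann surface is hyperelliptic, hence carries a degree two holomorphic map onto $\mathbb{C}\mathrm{P}^1$; identifying $\mathbb{C}\mathrm{P}^1$ with the round sphere $S^2(1)\subset\mathbb{R}^3$ we obtain a conformal branched double cover $\psi_0\colon\Sigma\longrightarrow S^2(1)$.

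Next I would carry out the eigenvalue estimate. As in Hersch's argument, pick a conformal dilation $\rho$ of $S^2(1)$ such that $\psi:=\rho\circ\psi_0$ has centre of mass at the origin, i.e. $\int_\Sigma\psi\,d\mu=0$. Because $\psi$ has nonzero degree it takes values in no affine hyperplane $\{x_i=\mathrm{const}\}$, so each coordinate $\psi_i$ is a nonconstant function of zero mean, hence admissible in the Rayleigh quotient: $\lambda_1(ds^2)\int_\Sigma\psi_i^2\,d\mu\le\int_\Sigma|\nabla\psi_i|^2\,d\mu$. Summing over $i=1,2,3$, using $\sum_i\psi_i^2\equiv1$, and noting that the conformal branched cover $\psi$ has energy $\int_\Sigma|\nabla\psi|^2\,d\mu=8\pi\deg(\psi)=16\pi$ (the energy of a conformal map into $S^2(1)$ being twice the area of the image counted with multiplicity), gives
\[
\lambda_1(ds^2)\,Area(ds^2)\ \le\ \int_\Sigma|\nabla\psi|^2\,d\mu\ =\ 16\pi .
\]
Since the left-hand side equals $16\pi$, all these inequalities are equalities; in particular each $\psi_i$ attains the first eigenvalue, so $\Delta^{ds^2}\psi_i=-\lambda_1(ds^2)\,\psi_i$ for $i=1,2,3$.

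Then I would read the metric off. Applying $\Delta^{ds^2}$ to $\sum_i\psi_i^2\equiv1$ and using the three eigenfunction equations gives $0=2\big(|\nabla\psi|^2-\lambda_1(ds^2)\big)$ on the smooth part of $\Sigma$, so $\psi$ has constant energy density there and consequently $\psi^*g_{S^2(1)}=\tfrac12\lambda_1(ds^2)\,ds^2$. Hence $ds^2=\tfrac{2}{\lambda_1(ds^2)}\,\phi^*g_{S^2(1)}$ with $\phi:=\psi$ a degree two conformal map; rescaling $ds^2$ (which alters neither $\Sigma$, nor $\phi$, nor the scale-invariant product $\lambda_1\cdot Area$) we may take $ds^2=\phi^*g_{S^2(1)}$, the branched spherical metric induced by $\phi$, of constant curvature $1$ away from the ramification locus of $\phi$. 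Finally, Riemann--Hurwitz for $\phi\colon\Sigma\to S^2$ with $g(\Sigma)=2$ and $\deg\phi=2$ reads $2-2\cdot2=2(2-0)-b$, so there are $b=6$ simple ramification points; near each of them $\phi$ is modelled on $z\mapsto z^2$, producing a conical singularity of $ds^2$ of cone angle $4\pi$. Thus $ds^2$ has exactly six singular points, the images of the Weierstrass points of $\Sigma$.

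The delicate point, and the place where genus two is essential, is the equality analysis: one must know beforehand that $\Lambda_1(2)=16\pi$ (this is \cite{nayatani}) in order to force Hersch's inequality to be an equality, and one must check that the identities $\Delta^{ds^2}\psi_i=-\lambda_1(ds^2)\psi_i$ and $\Delta^{ds^2}\big(\sum_i\psi_i^2\big)=0$ are valid across the finitely many conical singularities of $ds^2$. This is exactly where the cited regularity of extremal metrics is used: since the singularities are isolated and have cone angle in $2\pi\mathbb{Z}$, the bounded functions $\psi_i$ lie in $H^1(\Sigma,ds^2)$ and the cone points carry no capacity, so they affect neither the variational characterisation of $\lambda_1$ nor the pointwise identities off the singular set. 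Everything else is a routine combination of Hersch's normalisation, the variational principle for $\lambda_1$, and Riemann--Hurwitz.
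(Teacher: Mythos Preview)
Your proposal is correct and follows essentially the same route as the paper: use the hyperelliptic double cover, apply the Hersch/Yang--Yau normalisation so the centre of mass vanishes, compute the energy as $16\pi$, and use extremality $\lambda_1\,Area=\Lambda_1(2)=16\pi$ to force equality in the Rayleigh quotient, whence the coordinates of $\psi$ are first eigenfunctions and $ds^2$ is the pulled-back spherical metric. You supply a bit more detail than the paper---the explicit constant-energy-density step via $\Delta(\sum\psi_i^2)=0$, the Riemann--Hurwitz count of the six branch points, and the remark on capacity of the cone points---but the argument is the same.
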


\begin{proof} On the Riemannian surface $(\Sigma,ds^2)$ we consider $\nabla$, $\Delta$ and $d\mu$, the gradient operator, the Laplacian and the Riemannian measure, respectively.
The induced Riemann surface must be hyperelliptic because it has genus two and thus
there exists a holomorphic map \mbox{$\phi: \Sigma \longrightarrow S^2 (1) \subset \mathbb{R}^3$} with degree two. 
It is easy to show that $\Delta \phi+|\nabla\phi|^2 \phi = 0$. By using the argument of 
Yang and Yau, \cite{yy}, we can assume, after composition with a suitable conformal transformation of $S^2(1)$, that  
the mean value of $\phi$ is equal to zero
\[
\int_\Sigma \phi \, d\mu = 0.
\] 
For the energy of  $\phi$, we have 

\[
\int_\Sigma |\nabla\phi |^2 d\mu =
 8\pi deg(\phi) = 16\pi.
\]
 Moreover, since the image of $\phi$ is in $S^2(1)$, we have
\[
\int_\Sigma |\phi|^2 d\mu = Area(ds^2 ).
\]
Combining these facts yields
\[
\frac{
\int_\Sigma |\nabla\phi|^2 d\mu
}
{
\int_\Sigma |\phi|^2 d\mu
} =
\frac{16\pi}{Area(ds^2)} = \lambda_1(ds^2),
\]
where the second equality follows since $ds^2$ is extremal, \cite{nayatani}. As a consequence the linear coordinates of $\phi$ are eigenfunctions of $\lambda_1(ds^2 )$ and $ds^2$  is induced by the branched minimal
immersion \mbox{$\phi: \Sigma\longrightarrow S^2 (1)$}. Therefore $ds^2$ is a spherical metric with conical singularities and the cone angle is equal to $4\pi$ at each one of these points.
\end{proof}

\vspace{.2cm}

Cianci, Karpukhin and Medvedev,  \cite{karpu1} Theorem 1.4, show that if an extremal metric 
on a Riemann surface $\Sigma$ is given by an holomorphic map  $\phi: \Sigma\longrightarrow S^2 (1)$, 
then any other extremal metric on the same Riemann surface provides a branched conformal immersion 
by first eigenfunctions to the $2$-sphere and the two branched covering maps differ by a composition with a conformal automorphism (not necessarily orthogonal) of $S^2(1)$.


\section{The eigenvalue inequality for surfaces of genus $3$}

In this section, we prepare some material and we prove Theorem \ref{main}. 

\vspace{.2cm}

Let $(\Sigma,ds^2)$ be a Riemannian surface of genus $3$ and 
$
\phi_a:\Sigma\longrightarrow {H\hspace{-.03cm}M}_1\hspace{-.03cm}(3)
$, 
$
\phi_a = A + a \hspace{.02cm} B,
$ 
be the conformal spherical map (\ref{phia}).
We will see in \S\ref{center} that we can modify the surface, by using projective transformations, 
in order its mean value is equal to $\frac{1}{3}I$. Thus we will obtain an upper bound for first 
eigenvalue functional $\lambda_1(ds^2)Area(ds^2)$ in terms of the energy of $\phi_a$. In the case $a=0$, 
this is the approach used by Bourguignon, Li and Yau, \cite{bourgui}.

\begin{lemma}
The distance between the point
$\frac{1}{3}I$ and the convex hypersurface $\partial {\mathcal H}$ is equal to $\sqrt{3}/3$.
\end{lemma}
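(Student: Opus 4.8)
I want to compute the Euclidean distance in $HM_1(3) \simeq \mathbb{R}^8$ from the point $\frac{1}{3}I$ to the boundary $\partial\mathcal{H}$ of the convex hull of $\mathbb{C}\mathrm{P}^2$. The key structural fact, recalled in the Preliminaries, is that $\partial\mathcal{H} = \{A \in HM_1(3) : A \geq 0,\ \operatorname{rank} A \leq 2\}$, i.e. $\partial\mathcal{H}$ consists of the positive semidefinite trace-one Hermitian matrices having at least one zero eigenvalue. So I must minimize $|A - \frac{1}{3}I|$ over all such $A$. Since the inner product is $\langle X,Y\rangle = 2\operatorname{tr} XY$, for $A$ with eigenvalues $\mu_1,\mu_2,\mu_3 \geq 0$, $\mu_1+\mu_2+\mu_3 = 1$, one of them zero, I have $|A - \tfrac13 I|^2 = 2\sum_i (\mu_i - \tfrac13)^2$, and this depends only on the spectrum of $A$. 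Therefore the problem reduces to the purely finite-dimensional optimization: minimize $\sum_i(\mu_i - \tfrac13)^2$ subject to $\mu_i \geq 0$, $\sum\mu_i = 1$, and $\min_i \mu_i = 0$.

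\textbf{Carrying it out.} Set $\mu_3 = 0$; then minimize $(\mu_1 - \tfrac13)^2 + (\mu_2 - \tfrac13)^2 + \tfrac19$ subject to $\mu_1 + \mu_2 = 1$, $\mu_1,\mu_2 \geq 0$. By symmetry and convexity the minimum over the segment is at $\mu_1 = \mu_2 = \tfrac12$, giving $(\tfrac12 - \tfrac13)^2 \cdot 2 + \tfrac19 = \tfrac{2}{36} + \tfrac19 = \tfrac{1}{18} + \tfrac{2}{18} = \tfrac{3}{18} = \tfrac16$. Hence $|A - \tfrac13 I|^2 = 2 \cdot \tfrac16 = \tfrac13$, so the distance is $\sqrt{1/3} = \sqrt{3}/3$, realized by any $A$ unitarily conjugate to $\operatorname{diag}(\tfrac12, \tfrac12, 0)$ — that is, $A = $ the projection onto a complex projective line $\mathbb{C}\mathrm{P}^1$ scaled appropriately; such $A$ indeed lies on one of the unit $3$-balls filling $\partial\mathcal{H}$. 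One should also check that no boundary point is closer: since $\frac13 I > 0$ lies in $\operatorname{int}\mathcal H$ and $\mathcal H$ is convex and compact, the nearest point of $\partial\mathcal H$ exists and the infimum above is attained; the reduction to spectra shows $\tfrac13$ is a genuine lower bound for $|A-\tfrac13 I|^2$ over all of $\partial\mathcal H$.

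\textbf{Where the work is.} There is essentially no hard step; the only thing requiring a little care is justifying that minimizing over the \emph{spectrum} (rather than over matrices) is legitimate and that the constraint "$\operatorname{rank} \leq 2$" translates exactly to "smallest eigenvalue $= 0$" (using $A \geq 0$, so rank equals the number of nonzero eigenvalues). Once that is in place, the constrained quadratic minimization is elementary: it is the squared distance from $(\tfrac13,\tfrac13,\tfrac13)$ to the edge $\{\mu_3 = 0\} \cap \Delta^2$ of the standard $2$-simplex, which is attained at the midpoint $(\tfrac12,\tfrac12,0)$ of that edge. I would present the argument in exactly that order: recall $\partial\mathcal H$'s description, reduce $|A - \tfrac13 I|^2$ to a function of eigenvalues, solve the simplex-edge distance problem, and conclude $\operatorname{dist}(\tfrac13 I, \partial\mathcal H) = \sqrt{3}/3$.
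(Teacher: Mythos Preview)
Your proof is correct and follows essentially the same approach as the paper: both reduce to diagonal matrices via unitary invariance (you phrase this as the distance depending only on the spectrum), then minimize over the edge of the simplex to find the closest point $\operatorname{diag}(0,\tfrac12,\tfrac12)$. The only cosmetic difference is that the paper argues geometrically that the minimizer must be the midpoint of the segment joining two points of $\mathbb{C}{\mathrm P}^2$, whereas you do the quadratic minimization directly.
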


\begin{proof}
The group $U(3)$ preserves the submanifold $\mathbb{C}{\mathrm P}^2 \subset HM_1 (3)$, the convex body $\mathcal H$ and its
central point  $\frac{1}{3}I$. From (\ref{calculos}), for any $A \in \mathbb{C}{\mathrm P}^2$
we have $|A -\frac{1}{3}I |= 2/\sqrt{3}$.

Let $Q\in\partial{\mathcal H}$.  As $1\leq rank\, Q \leq 2$, by using a unitary transformation 
we can assume that $Q$ is a diagonal matrix  of the form
\[
Q=
\left(
\begin{array}{ccc}
0 & &  \\
& \alpha & \\
&  & \beta
\end{array}
\right)
\hspace{.5cm} \alpha,\beta \geq 0, \, \, \alpha+\beta=1.
\]
If $Q$ is the closest point to $\frac{1}{3}I$, as it belongs to the segment determined by two points of 
$\mathbb{C}{\mathrm P}^2$, it follows that $Q$ is the midpoint of that segment,

\[
Q=
\left(
\begin{array}{ccc}
0 & &  \\
& \frac{1}{2} & \\
&  & \frac{1}{2}
\end{array}
\right).
\]
Finally, by direct computation we get
$|Q-\frac{1}{3}I|= \frac{\sqrt{3}}{3}$.
\end{proof}

\subsection{The center of mass}
\label{center}

\hspace{.2cm}

\vspace{.2cm}

Let $(\Sigma,ds^2)$ be a Riemannian surface of genus $3$ which is not conformally hyperelliptic. 
From \S\ref{quartic} we can assume that $\Sigma\subset \mathbb{C}{\mathrm P}^2$ is a quartic curve 
and $ds^2$ is conformal to the induced metric $ds^2 = e^{2u}\langle,\rangle$, $u\in C^\infty(\Sigma)$. 
The Riemannian measures are related in the same way $d\mu = e^{2u}d\Sigma$. 

\vspace{.2cm}

Let $P\in int\,{\mathcal H}$, $f_P:\mathbb{C}{\mathrm P}^2\longrightarrow\mathbb{C}{\mathrm P}^2$ 
the associated projectivity (\ref{projectivity}) and  $\Sigma_P=f_P(\Sigma)$ be the image quartic curve. 
The corresponding induced Riemannian surfaces $(\Sigma_P, \langle,\rangle_P)$ are all conformally 
equivalent and their Riemannian measures $d\Sigma_P$ give the same area.

\vspace{.2cm}

We consider the inclusion map of $\Sigma_P$ and its Gauss map
\[
A_P:\Sigma_P\longrightarrow {H\hspace{-.03cm}M}_1\hspace{-.03cm}(3),
\hspace{1cm}
B_P:\Sigma_P\longrightarrow {H\hspace{-.03cm}M}\hspace{-.03cm}(3).
\]
These objects allow us to define, for $a\in\mathbb{R}$, the {\it center of mass} map
\[
\Phi_a:int\,{\mathcal H}\longrightarrow {H\hspace{-.03cm}M}_1\hspace{-.03cm}(3),
\]
as the mean value of the map $\phi_a$ associated to the curve $\Sigma_P$,

\vspace{-.2cm} 

\[
\Phi_a(P) = \frac{1}{Area(ds^2)}\int_{\Sigma_P}\big( A_P + a\hspace{.02cm} B_P \big)\, d\mu.
\]

\vspace{.1cm} 

 Note that, if $P=\frac{1}{3}I$, from (\ref{phia}) we have

\[
\Phi_a(\frac{1}{3}I)= \frac{1}{Area(ds^2)}\int_{\Sigma}\phi_a  d\mu, \hspace{1cm}
\Phi_a(\frac{1}{3}I) - \frac{1}{3}I= \frac{1}{Area(ds^2)}\int_{\Sigma}\big( \phi_a - \frac{1}{3}I\big)\, d\mu.
\]

\vspace{.1cm}

The map $\Phi$ depends continuously on $a$ and $P$, and for $a=0$ it satisfies the following, 
see \cite{bourgui}:   

\vspace{.1cm} 

\begin{itemize}
  \item[$\cdot$]  The map $\Phi_0$ extends continuously to the compact convex body  $\mathcal H$, 
  $\Phi_0:\mathcal H\longrightarrow {H\hspace{-.03cm}M}_1\hspace{-.03cm}(3)$.

\vspace{.1cm}
 
 \item[$\cdot$] This extension maps the boundary to itself and the restriction 
  $\Phi_0:\partial {\mathcal H}\longrightarrow \partial {\mathcal H}$ has non zero topological degree.
\end{itemize}

 These facts play an essential role in the argument of \cite{bourgui} and they will in ours, too. 

\begin{lemma}  
\label{mass}
Let $ds^2$ be a conformal metric on the quartic curve $\Sigma\subset {\mathbb C}{\mathrm P}^2$. 
For  $0\leq a <\sqrt{3}/6$, the point $\frac{1}{3}I$ lies in the image of the center of mass map 
$\Phi_a:int\, {\mathcal H}\longrightarrow {H\hspace{-.03cm}M}_1\hspace{-.03cm}(3)$.
\end{lemma}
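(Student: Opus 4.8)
The plan is to mimic the degree argument of Bourguignon--Li--Yau for $a=0$ and show that it persists for small $a>0$. Recall that for $a=0$ the center of mass map extends continuously to $\Phi_0:\mathcal H\longrightarrow {H\hspace{-.03cm}M}_1\hspace{-.03cm}(3)$, it maps $\partial\mathcal H$ to itself, and $\Phi_0|_{\partial\mathcal H}:\partial\mathcal H\longrightarrow\partial\mathcal H$ has nonzero topological degree. In particular, since $\frac13 I\in int\,\mathcal H$ and $\Phi_0(\partial\mathcal H)\subset\partial\mathcal H$, a standard degree/winding argument gives $\frac13 I\in\Phi_0(\mathcal H)$, and in fact (because the boundary values already avoid $\frac13 I$) the preimage lies in $int\,\mathcal H$. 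The goal is to run the same argument for $\Phi_a$ with $0\le a<\sqrt3/6$, the point being that $\Phi_a$ is a small perturbation of $\Phi_0$ and a map close on $\partial\mathcal H$ to one with nonzero degree whose image avoids $\frac13 I$ still has $\frac13 I$ in its image.

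The key steps I would carry out, in order, are the following. First, uniformly estimate the displacement $\|\Phi_a(P)-\Phi_0(P)\|$ for $P\in\mathcal H$: writing $\Phi_a(P)-\Phi_0(P)=\tfrac{a}{Area(ds^2)}\int_{\Sigma_P}B_P\,d\mu$ and using $|B_P|=2$ from (\ref{calculos}), one gets $\|\Phi_a(P)-\Phi_0(P)\|\le 2a$ (with $Area(ds^2)$ the common area of all the $\Sigma_P$, equal to $16\pi$); the crucial feature is that this bound is \emph{independent of $P$}, so it survives the passage to the boundary. Second, observe that $\Phi_0$ maps $\partial\mathcal H$ into $\partial\mathcal H$, so for $P\in\partial\mathcal H$ the point $\Phi_0(P)$ is at distance $\ge\sqrt3/3$ from $\tfrac13 I$ by the Lemma on the distance from $\tfrac13 I$ to $\partial\mathcal H$. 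Third, combine: for $P\in\partial\mathcal H$,
\[
\big\|\Phi_a(P)-\tfrac13 I\big\|\ \ge\ \big\|\Phi_0(P)-\tfrac13 I\big\|-\big\|\Phi_a(P)-\Phi_0(P)\big\|\ \ge\ \tfrac{\sqrt3}{3}-2a\ >\ 0
\]
precisely because $a<\sqrt3/6$. Hence the straight-line homotopy $(1-t)\Phi_0+t\Phi_a$ restricted to $\partial\mathcal H$ never passes through $\tfrac13 I$, so $\Phi_a|_{\partial\mathcal H}$ is homotopic in ${H\hspace{-.03cm}M}_1\hspace{-.03cm}(3)\setminus\{\tfrac13 I\}\simeq\R^8\setminus\{0\}$ to $\Phi_0|_{\partial\mathcal H}$, which has nonzero degree onto $\partial\mathcal H$ (a sphere around $\tfrac13 I$ after radial projection). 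Fourth, conclude by the usual extension/degree argument: since $\Phi_a$ is continuous on $int\,\mathcal H$, extends continuously to $\mathcal H$ — here I would note that the $a=0$ extension argument of \cite{bourgui} applies verbatim to $\phi_a=A+aB$ since $B$ is bounded, or alternatively observe $\Phi_a=\Phi_0+a\cdot(\text{bounded continuous correction})$ — and its restriction to $\partial\mathcal H$ has nonzero winding number around $\tfrac13 I$, the map cannot avoid $\tfrac13 I$ on $\mathcal H$; and because the boundary values stay a definite distance $\tfrac{\sqrt3}{3}-2a$ away from $\tfrac13 I$, the preimage point lies in $int\,\mathcal H$, which is what is claimed.

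The main obstacle I anticipate is not the degree bookkeeping but justifying that $\Phi_a$ extends continuously to the full compact body $\mathcal H$ — i.e., controlling the behavior of the Gauss map $B_P$ and the measure $d\mu$ as $P\to\partial\mathcal H$, where the projectivity $f_P$ degenerates and the curve $\Sigma_P$ collapses toward a complex line. For $a=0$ this is exactly the delicate point handled in \cite{bourgui} (the mass concentrates at a single point of $\partial\mathcal H$), and the content here is that adding $aB_P$ does not spoil it: since $B_P=\Delta_{ds^2_P} A_P$ up to conformal factor and $|B_P|\equiv 2$ is uniformly bounded, the extra term contributes a uniformly bounded, continuously-varying quantity, so the limit behaviour is governed by the same collapse as in the $a=0$ case. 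I would therefore spend the bulk of the write-up checking that the \cite{bourgui} extension argument goes through with $\phi_a$ in place of $A$, and treat the homotopy/degree step as the short formal conclusion above.
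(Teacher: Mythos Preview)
Your core argument---the uniform bound $\|\Phi_a(P)-\Phi_0(P)\|\le 2a$ coming from $|B_P|\equiv 2$, combined with $\operatorname{dist}(\tfrac13 I,\partial\mathcal H)=\sqrt3/3$, feeding into a degree/homotopy argument---is exactly the paper's. The one substantive difference is where the degree argument is run, and this is precisely the obstacle you flag.

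You work on $\partial\mathcal H$ itself, which forces you to extend $\Phi_a$ continuously to the closed body $\mathcal H$. You correctly identify this as the delicate point, but your sketch (``boundedness of $B_P$'' or ``$\Phi_a=\Phi_0+a\cdot(\text{bounded correction})$'') does not settle it: a uniformly bounded integrand need not produce a continuous limit of the integral as $P\to\partial\mathcal H$, and the Gauss map $B_P$ carries first-order information that may oscillate as the projectivity degenerates. Whether the Bourguignon--Li--Yau collapse argument really goes through for $B_P$ would need genuine checking.

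The paper sidesteps this entirely. Instead of $\partial\mathcal H$, it works on an interior hypersurface
\[
S_\varepsilon=\{(1-\varepsilon)Q+\tfrac{\varepsilon}{3}I:Q\in\partial\mathcal H\}\subset int\,\mathcal H,
\]
where $\Phi_b$ is already defined and continuous for all $b$. The only boundary behaviour invoked is that of $\Phi_0$ (already in \cite{bourgui}): if $P\in S_\varepsilon$ with $\varepsilon$ small, then $\Phi_0(P)$ is near $\partial\mathcal H$, hence $|\Phi_0(P)-\tfrac13 I|>2a$. The same contradiction you derive then shows $\Phi_b(S_\varepsilon)$ avoids $\tfrac13 I$ for $0\le b\le a$, and the homotopy runs on $S_\varepsilon$ rather than on $\partial\mathcal H$. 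No extension of $\Phi_a$ is needed at all.

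So: same estimate, same topological mechanism, but the paper's retreat to $S_\varepsilon$ eliminates the extension problem you were prepared to spend ``the bulk of the write-up'' on.
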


\begin{proof} 
As $\Phi_a$ is invariant under homothetic rescaling of the metrics $ds^2$, we will suppose that $Area(ds^2)=1$.
For $0 <\varepsilon < 1$  we consider the convex hypersurface

\[
{ S}_\varepsilon =
\{ (1-\varepsilon)Q + \frac{\varepsilon}{3}I \, / \, Q\in \partial {\mathcal H}\} \subset int \,{\mathcal H}
\]
and the bounded region $D_\varepsilon$ in $ {H\hspace{-.03cm}M}_1\hspace{-.03cm}(3)$ enclosed by 
${ S}_\varepsilon$,
 $\partial D_\varepsilon ={ S}_\varepsilon$.

\vspace{.2cm}

Given $a < \sqrt{3}/6$, we claim that for $\varepsilon$ small enough the image hypersurface
$\Phi_b({ S}_\varepsilon)$, $0\leq b \leq a$, does not contain the point $\frac{1}{3}I$. In fact,
reasoning by contradiction suppose that $\Phi_b(P)=\frac{1}{3}I$, for some $P\in { S}_\varepsilon$. Then 
\[
0=\Phi_b(P) -\frac{1}{3}I =
\Phi_0(P) -\frac{1}{3}I + b \int_\Sigma  B_P\, d\mu
\]
and therefore 
\begin{equation}
\label{xxx}
|\Phi_0(P)-\frac{1}{3}I | = b\left|\hspace{-.01cm}\int_\Sigma  B_P\, d\mu \right|  
\leq b\int_\Sigma |B_P|\, d\mu \leq 2a.
\end{equation} 
If we take $\varepsilon$ close to $0$, then
the point $P$ is close to $\partial {\mathcal H}$ and 
we have that $\Phi_0(P)$ is near $\partial{\mathcal H}$, \cite{bourgui}. In particular, 
the term at the left-hand-side of (\ref{xxx}) is larger than 
$2a$ and we have the desired contradiction. 

\vspace{.2cm}

Finally, we consider the continuous family of restricted maps
\begin{equation}
{\Phi_b}{\big|_{S_\varepsilon}}: S_\varepsilon\longrightarrow {H\hspace{-.03cm}M}_1
\hspace{-.03cm}(3)-\{\frac{1}{3}I\}, \hspace{1cm} 0\leq b \leq a.
\label{porfin}
\end{equation}

 As the degree of the ${\Phi_0}{\big|_{S_\varepsilon}}$ is non zero, \cite{bourgui}, 
by a well-known topological argument we get that $\frac{1}{3}I \in \Phi_a(D_\varepsilon)$ 
and we conclude the proof of the lemma. 
\end{proof}

\vspace{.05cm}

\begin{proof}[{\bf Proof of Theorem \ref{main}}]
 Let $(\Sigma,ds^2)$ be a compact Riemann surface of genus $g=3$ with a conformal metric.
If $\Sigma$ is hyperelliptic, then $\lambda_1(ds^2)A(ds^2)\leq 16\pi$, \cite{yy}. 
So henceforth we will assume $\Sigma$ is a smooth quartic algebraic curve in the complex projective 
plane $\Sigma \subset \mathbb{C}{\mathrm P}^2$.

For $a\in\mathbb{R}$ we consider the map (\ref{phia}),
\[
\phi_a:\Sigma\longrightarrow H\hspace{-.03cm}M_1\hspace{-.03cm}(3)\hspace{1cm} \phi_a = A+a\hspace{.03cm}B
\]
and we define the function
\begin{equation}
\label{F(a)}
F(a)=\frac{\int_\Sigma |\nabla \phi_a|^2 d\Sigma}
{
{\color{white}.}
\hspace{.3cm}|\phi_a-\frac{1}{3}I|^2\hspace{.3cm}
{\color{white}.}
}. 
\end{equation}

\vspace{.4cm}

{\bf Claim.} 
{\it a)} {\it For any quartic curve $\Sigma\subset \mathbb{C}{\mathrm P}^2$ we have}
\[
 \hspace{1cm} F(a)= \frac{7a^2-4a +1 }{3a^2-3a+1} 24\hspace{.03cm}\pi.
\]

\vspace{.2cm}

{\it b)}  {\it  The minimum of $F$ is attained for $a_1= (4-\sqrt{7})/{9}\approx 0.150$ and}
\[
 F(a_1)= (4-\sqrt{7})\hspace{.03cm}16\hspace{.03cm}\pi \approx 21.668\hspace{.03cm}\pi.
\]

\vspace{.2cm}

{\it Proof of the Claim.} a) In our  case $g=3$, $d=4$ and from Lemma \ref{lema1} we obtain

\[
\int_\Sigma |\nabla \phi_a|^2 d\Sigma= (7a^2-4a +1)\hspace{.03cm}32\hspace{.03cm} \pi. 
\]

\vspace{.1cm}

b) By direct computation we see that critical points of $F$ lie at $a=(4\pm\sqrt{7})/{9}$.
As  $\lim_{a\rightarrow \pm\infty} F(a)= 56\hspace{.03cm}\pi$, we conclude that 
the minimum of $F$ is attained for  $a_1=(4-\sqrt{7})/9$  
and is equal to $F(a_1)= (4-\sqrt{7})\hspace{.03cm}16\hspace{.03cm}\pi$. 
  
\vspace{.2cm}

{Now we conclude the proof of the theorem.} 

\vspace{.2cm}

As $a_1< \sqrt{3}/6$, from Lemma \ref{mass} there is a point $P\in int\, {\mathcal H}$ such that 
$\Phi_{a_1}(P)=\frac{1}{3}I$. Changing $\Sigma$ by its projective image, we can assume that  $P$ 
is proportional to the identity matrix and therefore 
\[
\int_\Sigma (\phi_{a_1}-\frac{1}{3}I)\, d\mu = 0.
\]
Using the linear coordinates of the map $\phi_{a_1}-\frac{1}{3}I$
as test functions for the first eigenvalue of $ds^2$ we obtain 
that $\lambda_1(ds^2)$ is smaller than or equal to the quotient of the energy of 
$\phi_{a_1}$ with respect to $ds^2$ over the integral of the length square of this vector valued map. 
As the total energy $\int_\Sigma |\nabla u|^2 d\mu$ of a function $u$ does not depend on the metric 
in the conformal class, by using the induced one $\langle,\rangle$ we have 
\[
\lambda_1(ds^2)  \leq 
\frac{\int_\Sigma |\nabla \phi_{a_1}|^2 d\Sigma}
{
{\color{white}.}
\hspace{.2cm}
\int_\Sigma |\phi_{a_1}-\frac{1}{3}I|^2 d\mu\hspace{.2cm}
{\color{white}.}
}
= \frac{F(a_1)}{\hspace{.1cm}Area(ds^2)\hspace{.1cm}}.
\]
Finally we obtain
\[
\lambda_1(ds^2) Area(ds^2) \leq F(a_1)
\]
and the theorem follows from the Claim above. 
\end{proof}

\subsection{Minimal surfaces of genus $3$ in manifolds with nonnegative Ricci curvature.} 

\hspace{.2cm}

\vspace{.2cm}

Let  $M^3$ an orientable Riemannian $3$-manifold with Ricci curvature bigger than or equal to zero, 
$Ric\geq 0$ and $\Sigma$ be a compact $2$-sided minimal surface of genus $g$ immersed in $M$. Let 
$\sigma$ the second fundamental form of the immersion and $d\Sigma$ be the induced measure. 
The Jacobi operator of the minimal immersion is, see \cite{ros} p. 72, 
\begin{equation}
\label{jacobi}
L = \Delta + Ric(N) +|\sigma|^2  =
\Delta  + Ric(e_1)+Ric(e_2) -2 K, 
\end{equation}
where $e_1,e_2$ is an orthonormal basis of the tangent plane of $\Sigma$ and $N$ is a unit normal vector. 

\vspace{.2cm}
We consider the eigenvalues of the Jacobi operator,
\[
\lambda_0<\lambda_1\leq \lambda_2 \leq \cdots.
\]

The eigenvalue $\lambda_0$ has multiplicity $1$ and the eigenfunction $\varphi_0$ will be assumed to be positive,
$L\varphi_0+\lambda_0\varphi_0 =0$.
The second variation formula of the area is given by the quadratic form 
\[
\label{q1}
Q(v,v)= -\int_\Sigma v Lv \hspace{.03cm} d\Sigma = 
\int_\Sigma \left\{|\nabla v|^2 -Ric(N) v^2 -|\sigma|^2 v^2\right\}  \hspace{.02cm} d\Sigma =
\]
\[
\int_\Sigma \left\{|\nabla v|^2 -\big(Ric(e_1)+Ric(e_2)\big)v^2 +2Kv^2 \right\}\hspace{.02cm} 
d\Sigma, \hspace{1cm} \forall v\in C^\infty(\Sigma).
\]

\vspace{.1cm}

Therefore, as $Ric\geq 0$, 
\[
 Q(v,v) \leq \int_\Sigma \big\{|\nabla v|^2 + 2Kv^2\big\} \hspace{.02cm}d\Sigma 
\]
and taking the function $v=1$ we obtain 
\begin{equation}
\label{lambda11}
\lambda_0 Area(\Sigma) \leq Q(1,1)\leq 2\int_\Sigma K \hspace{.02cm}d\Sigma = 8\pi (1-g). 
\end{equation}

If $g=3$, then from Lemma \ref{mass} we have an spherical map $\phi_{a_1}:\Sigma\longrightarrow HM_1(3)$
such that $\phi_{a_1}-\frac{1}{3}I$ is orthogonal to $\varphi_0$,
\[
\int_\Sigma (\phi_a -\frac{1}{3}I)\varphi_0\hspace{.03cm} d\Sigma = 0.
\]
So, $\lambda_1$ satisfies
\[
\lambda_1 \int_\Sigma \big|\phi_{a_1}-\frac{1}{3}I\big|^2 d\Sigma  
\leq Q\big(\phi_{a_1}-\frac{1}{3}I,\phi_{a_1}-\frac{1}{3}I\big) \leq 
\int_\Sigma |\nabla \phi_{a_1}|^2 d\Sigma  + 2\int_\Sigma K \big|\phi_{a_1}-\frac{1}{3}I\big|^2d\Sigma.
\]

\vspace{.2cm}
 
Thus, dividing by the constant $\big|\phi_{a_1}-\frac{1}{3}I\big|^2$, using Theorem \ref{main} and the Gauss-Bonnet 
theorem, we get

\begin{equation}
\label{lambda2}
\lambda_1 Area (\Sigma) \leq F(a_1) +2\int_\Sigma K \hspace{.02cm} d\Sigma \leq 16(4-\sqrt{7})\pi  -
16\pi = 16(3-\sqrt{7})\pi. 
\end{equation}

\vspace{.2cm}
Finally, from (\ref{lambda11}) and (\ref{lambda2}), we obtain the following bound for the first eigenvalues of the Jacobi operator of the minimal surface $\Sigma$.

\begin{corollary} Let $\Sigma$ be a compact two-sided minimal surface of genus $3$ immersed in an orientable Riemannian 
$3$-manifold $M$ with nonnegative Ricci curvature $Ric \geq 0$. If 
$\lambda_0$ and $\lambda_1$ are the first two eigenvalues of its Jacobi operator, then
\[
\lambda_0 \hspace{.02cm}Area(\Sigma) \leq -16 \hspace{.02cm}\pi,
\]
\[
\lambda_1 \hspace{.02cm}Area(\Sigma) \leq 16(3-\sqrt{7}) \hspace{.02cm}\pi \approx 5.668 \hspace{.02cm}\pi.
\]
\label{minimal}
\end{corollary}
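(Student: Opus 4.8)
The goal is to establish the two eigenvalue bounds for the Jacobi operator of a compact two-sided minimal surface $\Sigma$ of genus $3$ sitting in an orientable $3$-manifold $M$ with $Ric\geq 0$. Essentially all the analytic machinery has already been assembled in the paragraphs immediately preceding the statement, so the proof is a matter of correctly invoking it. The plan is to deduce the first inequality from the second variation formula applied to the constant function, and the second inequality by testing the second variation form against the coordinates of the balanced spherical map $\phi_{a_1}-\frac13 I$ supplied by Lemma~\ref{mass}.

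\textbf{Step 1: the bound on $\lambda_0$.} Since $\lambda_0$ has multiplicity one with positive eigenfunction, the variational characterization gives $\lambda_0\, Area(\Sigma)\leq Q(1,1)$ (the Rayleigh quotient of $Q$ evaluated at the constant function $v=1$, whose square-integral over $\Sigma$ is $Area(\Sigma)$). From the second variation formula~\eqref{q1} and $Ric\geq 0$ we get $Q(1,1)\leq 2\int_\Sigma K\, d\Sigma$, and Gauss--Bonnet yields $2\int_\Sigma K\, d\Sigma = 8\pi(1-g) = 8\pi(1-3) = -16\pi$. This is exactly~\eqref{lambda11} specialized to $g=3$, so $\lambda_0\, Area(\Sigma)\leq -16\pi$.

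\textbf{Step 2: the bound on $\lambda_1$.} Apply Lemma~\ref{mass} to the Riemann surface underlying $\Sigma$ with the conformal metric $d\Sigma$ (if $\Sigma$ is hyperelliptic the argument of \cite{yy} handles it directly, as in the proof of Theorem~\ref{main}, and one checks the resulting bound is no worse); after a projective transformation we obtain the spherical map $\phi_{a_1}:\Sigma\to HM_1(3)$ with $a_1=(4-\sqrt7)/9<\sqrt3/6$ and with $\phi_{a_1}-\frac13 I$ orthogonal in $L^2(d\Sigma)$ to $\varphi_0$. Feeding each linear coordinate of $\phi_{a_1}-\frac13 I$ into the min-max characterization of $\lambda_1$ and summing, $\lambda_1\int_\Sigma|\phi_{a_1}-\frac13 I|^2\, d\Sigma\leq Q(\phi_{a_1}-\frac13 I,\phi_{a_1}-\frac13 I)$. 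Bounding the right side by~\eqref{q1} and $Ric\geq 0$ gives $Q\leq \int_\Sigma|\nabla\phi_{a_1}|^2\, d\Sigma + 2\int_\Sigma K|\phi_{a_1}-\frac13 I|^2\, d\Sigma$. Now $|\phi_{a_1}-\frac13 I|^2$ is a constant by Lemma~\ref{lema1}(a), so dividing through by it and using that $\int_\Sigma|\nabla\phi_{a_1}|^2\, d\Sigma / |\phi_{a_1}-\frac13 I|^2 = F(a_1) = 16(4-\sqrt7)\pi$ (the Claim in the proof of Theorem~\ref{main}) together with $2\int_\Sigma K\, d\Sigma = -16\pi$ yields $\lambda_1\, Area(\Sigma)\leq 16(4-\sqrt7)\pi - 16\pi = 16(3-\sqrt7)\pi$, which is~\eqref{lambda2}.

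\textbf{Main obstacle.} There is no serious obstacle; the corollary is a direct packaging of~\eqref{lambda11} and~\eqref{lambda2}. The one point requiring a moment's care is the orthogonality hypothesis needed to test $\lambda_1$: Lemma~\ref{mass} is stated as producing a map whose \emph{mean value} is $\frac13 I$, i.e. orthogonality to constants, whereas here we need orthogonality to the ground state $\varphi_0$ of $L$. This is handled by the same topological degree argument as in Lemma~\ref{mass}, now balancing against the measure $\varphi_0\, d\Sigma$ in place of $d\mu$: the only properties used are that $\varphi_0>0$ and that $\int_\Sigma\varphi_0\, d\Sigma$ is finite, so the extension/degree argument of \cite{bourgui} goes through verbatim with the rescaled measure. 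Once that is noted, the two displayed inequalities are immediate.
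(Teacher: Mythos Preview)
Your proposal is correct and follows essentially the same approach as the paper: the two inequalities are exactly~\eqref{lambda11} and~\eqref{lambda2}, derived as you describe. Your explicit remark that Lemma~\ref{mass} must be applied with the weighted measure $\varphi_0\,d\Sigma$ (rather than $d\mu$) to secure orthogonality to the ground state is in fact more careful than the paper's own presentation, which simply asserts this orthogonality; the hyperelliptic case, which you also flag, is likewise glossed over in the paper but indeed yields the stronger bound $\lambda_1\,Area(\Sigma)\leq 0$.
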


It is shown in Ros \cite{ros} that if $g\geq 4$, then $\lambda_1$ is necessarily negative. 
For $g=3$, although the eigenvalue should be nonpositive for most of the surfaces, Ros \cite{ros2} 
constructed a minimal surface with $\lambda_1>0$ embedded in a  Riemannian $3$-dimensional projective 
space of positive curvature. On the other hand, Ketover, Marques and Neves \cite{ketovermarquesneves} prove that any compact orientable $3$-manifold with $Ric > 0$, admits a  minimal surface of genus $g\leq 2$ with $\lambda _1>0$ 
(the restriction on the genus follows from Hamilton \cite{hamilton}).   
So, the case $g=3$ is a borderline one and it would be of interest to improve the control of the invariant 
$\lambda_1 \hspace{.02cm}Area(\Sigma)$  given in Corollary \ref{minimal}.


{\footnotesize
\noindent
Department of Geometry and Topology and \vspace{-.12cm}\\
Institute of Mathematics (IEMath-GR), \vspace{-.12cm}\\
University of Granada\vspace{-.12cm}\\
 18071 Granada, Spain.\vspace{-.12cm}\\
{{\it Email address:} {\tt aros@ugr.es}} }

\end{document}